\documentclass[12 pt]{article}

\RequirePackage{amsthm,amsmath,amsfonts,amssymb}
\RequirePackage[authoryear]{natbib}
\RequirePackage[colorlinks,citecolor=blue,urlcolor=blue]{hyperref}
\RequirePackage{graphicx}

\usepackage[margin=1in]{geometry}
\usepackage{graphicx}
\usepackage{multirow}
\usepackage{mathrsfs}
\usepackage{bigints}
\usepackage{bbm}
\usepackage{xcolor}
\usepackage{verbatim}
\usepackage{caption}
\usepackage{subcaption}

\usepackage{natbib}
\usepackage{algorithm}
\usepackage{algpseudocode}

\usepackage{setspace}
\newtheorem{theorem}{Theorem}[section]

\newtheorem{example}{Example}[section]
\newtheorem{corollary}{Corollary}[example]

\usepackage{titling}
\title{Large Sample Properties of Higher Order Markov Models}
\author{Tuhin Majumder\\
    Department of Statistical Sciences, Wake Forest University\\
    and \\
    Donald E.K. Martin \\
    Department of Statistics, North Carolina State University\\
    and \\
    Soumendra N. Lahiri \\
    Department of Mathematics and Statistics, Washington University in St. Louis}

\date{\vspace{-5ex}}
\begin{document}

\maketitle

\begin{abstract}
We study large-sample properties of higher-order Markov chains on a finite alphabet $\Sigma$ when the order $m_n$ is allowed to grow with the sequence length $n$. By embedding the process into a first-order chain on $\Sigma^{m_n}$ and exploiting return-time decompositions, we establish a central limit theorem for additive functionals $\sum_{t}\! g_n(Y_t^{(n)})$ under natural ergodicity and sparsity conditions. The normalization involves the stationary return time to a suitably chosen state and accommodates triangular arrays with $m_n\!\to\!\infty$ and $m_n/n\!\to\!0$. We further illustrate the assumptions in a binary variable length Markov chain (VLMC), deriving explicit lower bounds on stationary masses that yield a concrete growth regime (e.g., $m_n\log m_n/n \to 0$) ensuring the CLT. These results provide asymptotic foundations for inference in sparse/partitioned higher-order models; including VLMCs and sparse Markov models (SMMs) where the effective dimensionality grows with the sample size. 
\end{abstract}

\section{Introduction} \label{chap3:sec_introduction}
Large-sample properties of first-order Markov chains are well understood and have wide applications. However, in many applications, the present state depends on the previous $m$ states. Let $\Phi = \{X_t: t = 0,1,2,\ldots\}$ be a sequence of random variables taking values in a finite state space $\Sigma$ of size
$|\Sigma| = d$. For $t \ge m$, if $P(X_{t} = x_{t} \mid X_{t-1} = x_{t-1}, \ldots, X_0 = x_0) = P(X_{t} = x_{t} \mid X_{t-1} = x_{t-1}, \ldots, X_{t-m} = x_{t-m}),$ then $\Phi$ is called a Markov chain of order $m$. Studying a Markov chain of order $m$ is not substantially different from the first-order case, as we can transform it into a first-order Markov chain by taking consecutive $m$-tuples as states. Denote this new chain by $\Phi' = \{\boldsymbol{Y}_t : t = 0,1,2,\ldots\}$, where $\boldsymbol{Y}_t = (X_{t+m-1}, \ldots, X_t)$. The new state space is $\Sigma' = \Sigma^m$, i.e., the set of all possible $m$-tuples from $\Sigma$. The corresponding transition matrix $P'$ has dimension $d^m \times d^m$. In each row of $P'$, at most $d$ entries can be non-zero, since $P(\boldsymbol{Y}_{t+1} = (j_1,\ldots,j_m) \mid \boldsymbol{Y}_t = (i_1,\ldots,i_m)) > 0$ only if $i_k = j_{k+1}$ for $k = 1,\ldots,m-1$. In that case, $
P(\boldsymbol{Y}_{t+1} = (j_1,\ldots,j_m) \mid \boldsymbol{Y}_{t} = (i_1,\ldots,i_m)) = P(X_{t+m} = j_1 \mid X_{t+m-1} = i_1, \ldots, X_t = i_m).$ Hence, the total number of entries to be estimated is $d^m(d-1)$, which grows exponentially with increasing $m$.

To reduce this model complexity, several dimension-reduction techniques have been proposed for higher-order Markov chains. For instance, \cite{raftery1985model} introduced a linear mixture model of the form
\[
P(X_t = j_0 \mid X_{t-1} = j_1, \ldots, X_{t-m} = j_m) = \sum_{i=1}^{m} \lambda_i q_{j_i j_0},
\]
where $\sum_{i=1}^{m} \lambda_i = 1$ and $Q = ((q_{ij}))$ is a known transition matrix on $\Sigma$. The weights $(\lambda_1, \ldots, \lambda_m)$ are estimated via maximum likelihood. Another widely used strategy is the Variable Length Markov Chain (VLMC) model, originally proposed by \cite{rissanen1983universal}, where the context length varies depending on the observed past. \cite{buhlmann1999variable} and \cite{buhlmann2000model} developed model selection methods and studied the asymptotic behavior of VLMCs. In these models, a context tree determines the relevant past suffixes for predicting future states, allowing for substantial reduction in the number of parameters when long dependencies are sparse. Recent work on Bayesian Context Trees (BCT) by \cite{kontoyiannis2020bayesian} and \cite{papageorgiou2022posterior} has provided posterior inference and predictive distributions for discrete-time sequence models.

A more general approach to sparse modeling of higher-order Markov data involves grouping the $m$-tuples in $\Sigma^m$ into partitions such that tuples within the same group share identical transition probabilities. These models are referred to as sparse Markov models (SMMs), first proposed by \cite{garcia2011minimal} and later extended in a Bayesian framework by \cite{jaaskinen2014sparse}. \cite{xiong2016recursive} introduced a recursive partitioning algorithm, while \cite{bennett2023fitting} used Gibbs sampling for posterior estimation. More recently, \cite{majumder2026fitting} developed a convex clustering approach for learning the underlying partition structure in the SMM set-up.

As long as the state space $\Sigma$ is finite, the central limit theorems (CLTs) and other large-sample results for SMMs mirror those for first-order Markov chains, provided certain ergodicity conditions hold. For example, \cite{buhlmann1999variable} derived large-sample properties of VLMC estimators, including bootstrap-based CLTs. However, the scenario becomes more intriguing when the Markov chain order grows with the sequence length, i.e., when the order is a function of $n$, denoted by $m_n$. Proving CLTs in this setup is considerably more challenging. In this paper, we give a central limit theorem for this case.

The rest of the chapter is organized as follows. Section (\ref{chap3:sec_results}) presents our main theoretical result. In Section (\ref{chap3:sec_example}), we illustrate the assumptions and conclusions for the special case of a binary VLMC. Section (\ref{chap3:sec_conclusion}) concludes the chapter with final remarks. 

\section{Central Limit Theorem} \label{chap3:sec_results}
Consider a triangular array of variable-order  Markov chains as follows. The $n^{th}$ chain is presented as $\Phi_n=\{X_0^{(n)},\ldots,X_n^{(n)}\}$, where $X_j^{(n)}\in \Sigma$. Recall that $\Sigma$ is a finite state space with $|\Sigma|=d$. However, the order of the Markov chain $\Phi_n$ (denoted by $m_n$) varies with $n.$ One can easily represent this Markov chain as a first-order Markov chain in the following manner. Let $\boldsymbol{Y_t}^{(n)}=(X_{t+m_n-1}^{(n)},\ldots,X_t^{(n)})$, $t=0,1,\ldots,n-m_n+1$. Then $\Phi_n'=\{\boldsymbol{Y_t}^{(n)}\}$ is a first-order Markov chain with state space $S_n=\Sigma^{m_n}$. We work with the Markov chain $\Phi_n'$ in the sequel.

Our goal is to establish a CLT-type result for a sequence of real-valued functions over $\Sigma$ in this triangular-array set up. Before going into the main result, we define some useful notation. Assume that the chain $\Phi_n'$ is aperiodic and irreducible for each $n$. Let $\pi_n$ be its corresponding stationary probability vector of length $d^{m_n}.$ For a state $\alpha\in S_n$, define $\sigma_\alpha^{(n)}(0) \equiv \inf \{j\ge 0: \boldsymbol{Y_j}^{(n)}=\alpha \}$. Thus $\sigma_\alpha^{(n)}(0)$ is the first hitting time of state $\alpha$ for the chain $\Phi_n'$. In a similar fashion, successively define the $k^{th}$ hitting time of $\alpha$ as follows:
$$
\sigma_\alpha^{(n)}(k)= \inf \{j> \sigma_\alpha^{(n)}(k-1): \boldsymbol{Y_j}^{(n)}=\alpha \}.
$$
Also define 
\begin{equation}\label{defn_l_n}
   \ell_{n,\alpha} \equiv \max \{k: \sigma_\alpha^{(n)}(k) \leq n-m_n+1 \}=\sum_{j=0}^{n-m_n+1}\mathcal{I}( \boldsymbol{Y_j}^{(n)}=\alpha)-1. 
\end{equation}
Then $\ell_{n,\alpha}$ is the number of returns of $\Phi_n'$ to state $\alpha$ after the first hitting time. 

If the order of the Markov chains in each array is fixed, i.e $m_n=m$ for some $m\in \mathbb{N}$, then any state $\alpha\in \Sigma^m$ is recurrent, so that $\ell_{n,\alpha} \to \infty$ as $n\to \infty$. However, in our set-up, $m_n\to \infty$ as $n\to \infty$. Note that for $\alpha \in \Sigma^{m_n}$, $E_{\pi_n}(\ell_{n,\alpha})=(n-m_n+2)\pi_n(\alpha)-1$, where $E_{\pi_n}[.]$ is the expectation under the stationarity assumption. If there exists a sequence of states $\{\alpha_n :\alpha_n \in \Sigma^{m_n}\}$ such that $n\pi_n(\alpha_n)\to \infty$ as $n \to \infty$, then $E_{\pi_n}(\ell_{n,\alpha_n})\to \infty$, i.e. the expected number of returns to $\alpha_n$ diverges. 

Next, we present our main result, which demonstrates the large sample properties of a triangular array of sparse Markov chains.
\begin{theorem}\label{chap3:thm_clt1}
Consider the following triangular array of higher order Markov chains over finite alphabet $\Sigma$ of cardinality $d.$ The $n^{th}$ array $\Phi_n=\{X_{0}^{(n)},\ldots,X_{n}^{(n)}\}$ has order $m_n$. Define $\boldsymbol{Y_t}^{(n)} \equiv (X_{t+m_n-1}^{(n)},\ldots,X_t^{(n)})$, $t=0,1,\ldots,n-m_n+1$. 
Denote the stationary probability vector of the $n^{th}$ chain by $\pi_n$. Suppose the following assumptions hold. 
\begin{itemize}
    \item[(i)] For each $n\in \mathbb{N}$, $\Phi_n'=\{\boldsymbol{Y_t}^{(n)}\}$ is an aperiodic and irreducible Markov chain over $\Sigma^{m_n}$.
    \item[(ii)] $m_n \to \infty$ as $n \to\infty$ with $\frac{m_n}{n}\to 0$.
    \item[(iii)] There exists a sequence of states $\{\alpha_n :\alpha_n \in \Sigma^{m_n}\}$ such that $n\pi_n(\alpha_n)\to \infty$ as $n \to \infty$. 
    \item[(iv)] $Var_{\pi_n}\big(l_{n,\alpha_n}/n\pi_n(\alpha_n)\big)\to 0$ as $n\to \infty$. 
\end{itemize}
Define $E_{\alpha_n}[.]$ as the expectation under the assumption that the initial state of the Markov chain is $\alpha_n$. Consider a sequence of functions $g_n:\Sigma^{m_n}\to \mathbb{R}$, such that 
\begin{itemize}
    \item[(i)] For each $n$, $E_{\alpha_n}\Big[\sum_{j=1}^{\tau_{\alpha_n}}\bar{g}_n(\boldsymbol{Y_j}^{(n)})\Big]^2<\infty$;
    \item[(ii)] $\sup_{n}\left\{\dfrac{E_{\alpha_n}\Big[\sum_{j=1}^{\tau_{\alpha_n}}|\bar{g}_n(\boldsymbol{Y_j}^{(n)})|\Big]^2}{E_{\alpha_n}\Big[\sum_{j=1}^{\tau_{\alpha_n}}\bar{g}_n(\boldsymbol{Y_j}^{(n)})\Big]^2}\right\}<\infty$;
\end{itemize}
where $\bar{g}_n(x)=g_n(x)-E_{\pi_n}(g_n)$.\\ 
Under the above conditions,
$$
\dfrac{1}{\sqrt{n}}\sum_{j=0}^{n-m_n+1}\dfrac{\bar{g}_n(\boldsymbol{Y_j}^{(n)})}{\sqrt{\dfrac{E_{\alpha_n}(\tau_{\alpha_n})}{E_{\alpha_n}\Big[\sum_{j=1}^{\tau_{\alpha_n}}\bar{g}_n(\boldsymbol{Y_j}^{(n)})\Big]^2}}}\xrightarrow{d} \mathcal{N}(0,1)
$$
\end{theorem}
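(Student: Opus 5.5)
We follow the classical regenerative (Nummelin/renewal) route, with the extra bookkeeping that everything is a triangular array. Fix $n$ and write $\alpha=\alpha_n$, $\tau=\tau_{\alpha_n}$, $\sigma(k)=\sigma^{(n)}_{\alpha}(k)$ and $\ell=\ell_{n,\alpha}$. We also set
\[
Z_k=\sum_{j=\sigma(k-1)+1}^{\sigma(k)}\bar g_n(\boldsymbol{Y_j}^{(n)}),\qquad k\ge 1 .
\]
By the strong Markov property the $Z_k$ are i.i.d. within row $n$, each distributed as $\sum_{j=1}^{\tau}\bar g_n(\boldsymbol{Y_j}^{(n)})$ under $P_{\alpha}$. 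Kac's formula gives $E_\alpha\tau=1/\pi_n(\alpha)$. The cycle formula $E_\alpha\sum_{j=1}^{\tau}h(\boldsymbol{Y_j}^{(n)})=E_{\pi_n}(h)/\pi_n(\alpha)$ then gives $E_\alpha Z_1=0$, since $E_{\pi_n}\bar g_n=0$. Put $s_n^2=E_\alpha Z_1^2$, which is finite by condition (i) on $g_n$. The target normalization is $\sqrt{n\,E_\alpha(\tau)/s_n^2}$. The claim is therefore equivalent to
\[
\frac{\sum_j \bar g_n(\boldsymbol{Y_j}^{(n)})}{s_n\sqrt{n\pi_n(\alpha)}}\xrightarrow{d}\mathcal N(0,1).
\]
We work under the stationary start, which is implicit in assumption (iv).

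We decompose the sum as $S_n=H_n+\sum_{k=1}^{\ell}Z_k+T_n$. Here $H_n$ is the initial segment $j\le\sigma(0)$ and $T_n$ is the incomplete final excursion after $\sigma(\ell)$.

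The main term is handled first. Set $N_n=\lfloor n\pi_n(\alpha)\rfloor$, which tends to $\infty$ by assumption (iii). Then $\sum_{k=1}^{N_n}Z_k/(s_n\sqrt{N_n})\Rightarrow\mathcal N(0,1)$ by the Lindeberg--Feller theorem for i.i.d. triangular arrays. The Lindeberg condition $E[Z_1^2\mathbf 1\{|Z_1|>\varepsilon s_n\sqrt{N_n}\}]/s_n^2\to0$ is where condition (ii) on $g_n$ must enter. Condition (ii) dominates $|Z_1|$ by $W=\sum_{j=1}^{\tau}|\bar g_n|$ with $E_\alpha W^2\le C s_n^2$, so the family $Z_1^2/s_n^2$ is bounded in $L^1$. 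We still need uniform integrability. We would try to get it from $W^2/s_n^2$ having uniformly bounded mean together with $N_n\to\infty$; if that is insufficient, we would invoke (ii) as the intended substitute for a uniform integrability hypothesis.

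The random index is handled next. Assumption (iv), together with $E\ell/(n\pi_n(\alpha))\to1$, gives $\ell/N_n\to1$ in $L^2$ and hence in probability. An Anscombe-type argument then replaces $N_n$ by $\ell$. By Kolmogorov's maximal inequality for the i.i.d. $Z_k$,
\[
P\Big(\max_{|k-N_n|\le\delta N_n}\Big|\sum_{i=N_n}^{k}Z_i\Big|>\varepsilon s_n\sqrt{N_n}\Big)\le \delta/\varepsilon^2 .
\]
One subtlety is that $\ell$ is not independent of the $Z_k$, but the maximal inequality does not require independence. A second subtlety is that the first cycle starts at the random time $\sigma(0)$; the strong Markov property at $\sigma(0)$ resolves this.

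The boundary terms $H_n$ and $T_n$ are the main obstacle. Each is bounded by a partial excursion sum of $|\bar g_n|$. Under stationarity, the excursion straddling a fixed time is size-biased, so $E_{\pi_n}|H_n|$ is controlled by $\pi_n(\alpha)E_\alpha[\tau W]$ rather than by $E_\alpha W$. This quantity is not obviously $o(s_n\sqrt{n\pi_n(\alpha)})$. Our first attempt is a Markov/Chebyshev bound showing $H_n/(s_n\sqrt{N_n})\to0$ in probability, using $P(\sigma(0)>\epsilon n)\to0$; this follows from (iii)--(iv), since these imply at least one visit in any window of length $\epsilon n$ with high probability. We then bound the partial excursion by the maximum of $W$-type sums over roughly $N_n$ cycles, via $\max_{k\le 2N_n}W_k/\sqrt{N_n}\to0$, which follows from uniform integrability of $W^2/s_n^2$. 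The tail $T_n$ is treated identically, by viewing it as part of cycle $\ell+1$. Slutsky's theorem then combines the three pieces. The condition $m_n/n\to0$ only ensures that the row length $n-m_n+2\sim n$.
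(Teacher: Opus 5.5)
Your skeleton (Kac, the cycle formula, i.i.d.\ cycle sums, Kolmogorov's maximal inequality for the random index, boundary terms, Slutsky) is the same as the paper's. The step you flag, however, is a genuine gap, and it cannot be closed: even with the normalization corrected (see below), the theorem is false under its stated hypotheses.

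\textbf{The Lindeberg step.} Write $W=\sum_{j=1}^{\tau_{\alpha_n}}|\bar g_n(\boldsymbol{Y_j}^{(n)})|$. Condition (ii) on $g_n$ only says $E_{\alpha_n}W^2\le C s_n^2$. That is a second-moment bound, not uniform integrability; note also that $E_{\alpha_n}Z_1^2/s_n^2=1$ holds trivially. So (ii) gives no control of $E_{\alpha_n}[Z_1^2\mathbf{1}\{|Z_1|>\varepsilon s_n\sqrt{N_n}\}]$. The paper has exactly the same hole: it invokes ``the CLT for row-wise i.i.d.\ random variables with mean $0$ and variance $1$'', which fails without a Lindeberg condition.

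Here is a counterexample. Let $\Sigma=\{0,1\}$ and let $X^{(n)}$ be a stationary first-order chain (hence of any order $m_n$) with $P(0\to1)=a/n$ and $P(1\to0)=b$, for fixed $a>0$, $b\in(0,1)$. Take $m_n=\lceil\log n\rceil$, $\alpha_n=0^{m_n}$ and $g_n(y)=y_1$, so $E_{\pi_n}g_n=p_n:=a/(a+bn)$.
\begin{itemize}
\item[(a)] Assumptions (i)--(iii) are immediate, since all transitions of $X^{(n)}$ are positive and $\pi_n(\alpha_n)\to1$.
\item[(b)] Each $1$ in the row destroys at most $m_n$ visits to $\alpha_n$, and the number of $1$'s has bounded second moment. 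Hence $\mathrm{Var}(\ell_n)=O(m_n^2)$ and (iv) holds.
\item[(c)] Under $P_{\alpha_n}$, with probability $1-a/n$ the excursion is a single step with $Z_1=-p_n$. Otherwise it is a Geometric$(b)$ run of $1$'s followed by about $m_n$ zeros. This gives $s_n^2\sim a(2-b)/(b^2n)$ and $E_{\alpha_n}W^2=O(1/n)$, so both conditions on $g_n$ hold.
\item[(d)] But $\sum_j\bar g_n(\boldsymbol{Y_j}^{(n)})$ equals the number of $1$'s among $X^{(n)}_{m_n-1},\dots,X^{(n)}_n$ minus $(n-m_n+2)p_n\to a/b$. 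This converges in law to a compound Poisson$(a)$ sum of Geometric$(b)$ variables minus $a/b$.
\item[(e)] Meanwhile $s_n\sqrt{n\pi_n(\alpha_n)}\to\sqrt{a(2-b)}/b$. The normalized limit has an atom of mass $e^{-a}$ at $-\sqrt{a/(2-b)}$, so it is not Gaussian.
\end{itemize}
Lindeberg fails because $|Z_1|$ is of the same order as $s_n\sqrt{N_n}$ on an event of probability $a/n\asymp 1/N_n$. So a Lindeberg-type hypothesis must be added, e.g.\ $E_{\alpha_n}[Z_1^2\mathbf{1}\{|Z_1|>\varepsilon s_n\sqrt{n\pi_n(\alpha_n)}\}]=o(s_n^2)$ for every $\varepsilon>0$. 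With it, your main-term and random-index steps are fine.

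\textbf{The boundary terms} need nothing beyond (ii) and (iv). It is routing them through $\max_{k\le 2N_n}W_k$ that makes you want uniform integrability there.
\begin{itemize}
\item[(a)] For $T_n$, the paper bounds it by the $W$ of the full cycle containing the right endpoint and localizes. On $\{|\ell_n/N_n-1|\le\delta\}$ that cycle is one of roughly $2\delta N_n$ i.i.d.\ cycles. A union bound with Chebyshev gives $\limsup_n P(T_n>\eta s_n\sqrt{N_n})\le 2C\delta/\eta^2$, and then let $\delta\downarrow0$.
\item[(b)] For $H_n$, your size-biasing concern is correct, and the paper's ``by similar logic'' skips it. The same localization works if you extend the stationary chain to negative times and regenerate at the first visit to $\alpha_n$ in $[-(n-m_n+2),-1]$; this window contains a visit with probability tending to one. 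Then $H_n$ is at most the $W$ of the cycle containing time $0$. That cycle's position in the i.i.d.\ sequence is the number of visits in the window, which by stationarity is distributed as $\ell_n+1$ and hence concentrated at $N_n$ by (iv). This also makes your unproved claim $P(\sigma(0)>\epsilon n)\to0$ unnecessary.
\end{itemize}

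\textbf{The normalization.} The ``equivalence'' in your first paragraph is an algebra slip. Since $E_{\alpha_n}\tau_{\alpha_n}=1/\pi_n(\alpha_n)$, the product of the displayed normalizer $\sqrt{nE_{\alpha_n}(\tau_{\alpha_n})/s_n^2}$ and $s_n\sqrt{n\pi_n(\alpha_n)}$ is $n$, so the two are not the same. The statement, and the paper's last implication, have the ratio inverted. The normalizer $s_n\sqrt{n\pi_n(\alpha_n)}$ is the one the regenerative argument actually delivers.
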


\textbf{Proof:}  From now on, we will denote $\ell_{n,\alpha_n}$ as $\ell_n$ for simplicity. First observe that $E_{\pi_n}(\ell_n)=(n-m_n+2)\pi_n(\alpha_n)-1$, i.e. $E_{\pi_n}\left(\frac{\ell_n}{n\pi_n(\alpha_n)}\right)\to 1$ as $n\to \infty$. Also, by assumption (iv), $Var_{\pi_n}\left(\frac{\ell_n}{n\pi_n(\alpha_n)}\right)\to 0$. Hence $\frac{\ell_n}{n\pi_n(\alpha_n)}\xrightarrow[]{p}1$ as $n\to \infty$, eventually leading to $\ell_n\xrightarrow[]{p} \infty $.\\
For any function $f:\Sigma^{m_n}\to \mathbb{R}$, define
$$
s_j^{(n)}(f)=\sum_{t=\sigma_{\alpha_n}(j)+1}^{\sigma_{\alpha_n}(j+1)}f(\boldsymbol{Y_t}^{(n)}), \quad j=0,1,...,\ell_n-1.
$$
By the strong Markov property, $s_j^{(n)}(g_n)$ are iid random variables with mean
$E_{\alpha_n}\Big[\sum_{j=1}^{\tau_{\alpha_n}}g_n(\boldsymbol{Y_j}^{(n)})\Big]=E_{\alpha_n}\Big[s_0^{(n)}(g_n)\Big]$ and variance $Var_{\alpha_n}\Big[s_0^{(n)}(g_n)\Big]=E_{\alpha_n}\Big[\big(s_0^{(n)}(g_n) \big)^2 \Big]-E^2_{\alpha_n}\Big[s_0^{(n)}(g_n)\Big]$. 
Define $\bar{g}_n(x)=g_n(x)-E_{\pi_n}(g_n(X))$, where $X\sim \pi_n$. For simplicity, let's define $E_{\pi_n}(g_n(X))=E_{\pi_n}\big[g_n\big]$ Note that $E_{\pi_n}\big[g_n\big]=\pi_n(\alpha_n)E_{\alpha_n}\big[\sum_{j=1}^{\tau_{\alpha_n}} g_n(\boldsymbol{Y_j}^{(n)})\big]$. Hence 
$$
\begin{aligned}
E_{\alpha_n}\big[s_j^{(n)}(\bar{g}_n)\big]&=E_{\alpha_n}\Big[\sum_{j=1}^{\tau_{\alpha_n}}\big( g_n(\boldsymbol{Y_j}^{(n)})-E_{\pi_n}(g_n)\big)\Big]\\
&=E_{\alpha_n}\Big[\sum_{j=1}^{\tau_{\alpha_n}} g_n(\boldsymbol{Y_j}^{(n)})-\tau_{\alpha_n}E_{\pi_n}(g_n)\Big]\\
&=E_{\alpha_n}\Big[\sum_{j=1}^{\tau_{\alpha_n}} g_n(\boldsymbol{Y_j}^{(n)})\Big]-E_{\alpha_n}\Big[\tau_{\alpha_n}\Big]E_{\pi_n}(g_n)\\
&=0,\quad \text{since $\pi_n(\alpha_n)=1/E_{\alpha_n}\Big[\tau_{\alpha_n}\Big]$}.
\end{aligned}
$$
The normalized and centralized random variables are defined as follows:
$$
s_j'^{(n)}(\bar{g}_n)=\dfrac{s_j^{(n)}(\bar{g}_n)}{\sqrt{Var_{\alpha_n}\Big[s_0^{(n)}(\bar{g}_n)}\Big]}=\dfrac{s_j^{(n)}(\bar{g}_n)}{\sqrt{E_{\alpha_n}\Big[s_0^{(n)}(\bar{g}_n)\Big]^2}}.
$$
Thus, $s_j'^{(n)}(\bar{g}_n), j=0,1,...,\ell_n-1$ are i.i.d random variables with mean $0$ and variance $1$. Denote $v_n^2=E_{\alpha_n}\Big[s_0^{(n)}(\bar{g}_n)\Big]^2$. Fix $0<\epsilon<1$. Define $\underline{n}=[(1-\epsilon)(n-m_n+2)\pi_n(\alpha_n)]$, $\bar{n}=[(1+\epsilon)(n-m_n+2)\pi_n(\alpha_n)]$, and $n^*=[(n-m_n+2)\pi_n(\alpha_n)]$, where $[\cdot]$ is the greatest integer function.\\
Note that
$$
\begin{aligned}
&\sum_{j=0}^{n-m_n+1}\bar{g}_n(\boldsymbol{Y_j}^{(n)})=\sum_{t=0}^{\sigma_{\alpha_n}(0)}\bar{g}_n(\boldsymbol{Y_t}^{(n)})+\sum_{j=0}^{\ell_n-1}s_j^{(n)}(\bar{g}_n)+\sum_{t=\sigma_{\alpha_n}(\ell_n)+1}^{n-m_n+1}\bar{g}_n(\boldsymbol{Y_t}^{(n)})\\
\implies & \Big\lvert \sum_{j=0}^{n-m_n+1}\bar{g}_n(\boldsymbol{Y_j}^{(n)})- \sum_{j=0}^{\ell_n-1}s_j^{(n)}(\bar{g}_n) \Big\rvert \le  \sum_{t=0}^{\sigma_{\alpha_n}(0)}\Big\lvert\bar{g}_n(\boldsymbol{Y_t}^{(n)})\Big\rvert  + s_{\ell_n}^{(n)}(|\bar{g}_n|)\\
\implies & \Big\lvert \dfrac{1}{\sqrt{n^*}v_n}\sum_{j=0}^{n-m_n+1}\bar{g}_n(\boldsymbol{Y_j}^{(n)})-\dfrac{1}{\sqrt{n^*}} \sum_{j=0}^{\ell_n-1}s_j'^{(n)}(\bar{g}_n) \Big\rvert \le  \dfrac{1}{\sqrt{n^*}v_n}\sum_{t=0}^{\sigma_{\alpha_n}(0)}\Big\lvert\bar{g}_n(\boldsymbol{Y_t}^{(n)})\Big\rvert  +\dfrac{s_{\ell_n}^{(n)}(|\bar{g}_n|)}{\sqrt{n^*}v_n} .
\end{aligned}
$$

Define
\[
Z_{n,j}=\frac{s_j^{(n)}(|\bar{g}_n|)}{v_n}.
\]
By assumption (ii), there exists a constant $C<\infty$ such that
\[
\sup_n E_{\alpha_n}\left[Z_{n,0}^2\right]
=
\sup_n
\frac{
E_{\alpha_n}\left[\left(s_0^{(n)}(|\bar{g}_n|)\right)^2\right]
}{v_n^2}
\le C.
\]
Recall that $\ell_n/n^*\xrightarrow{p}1$. Fix $\eta>0$ and
$0<\delta<1$. Then
\[
\begin{aligned}
P\left(
\frac{Z_{n,\ell_n}}{\sqrt{n^*}}>\eta
\right)
&\le
P\left(
\left|\frac{\ell_n}{n^*}-1\right|>\delta
\right)\\
&\quad+
P\left(
\max_{(1-\delta)n^*\le j\le(1+\delta)n^*}
Z_{n,j}>\eta\sqrt{n^*}
\right).
\end{aligned}
\]
By the union bound and Markov's inequality,
\[
\begin{aligned}
P\left(
\max_{(1-\delta)n^*\le j\le(1+\delta)n^*}
Z_{n,j}>\eta\sqrt{n^*}
\right)
&\le
\sum_{j=\lfloor(1-\delta)n^*\rfloor}^{\lceil(1+\delta)n^*\rceil}
P\left(
Z_{n,j}>\eta\sqrt{n^*}
\right)\\
&\le
\sum_{j=\lfloor(1-\delta)n^*\rfloor}^{\lceil(1+\delta)n^*\rceil}
\frac{E_{\alpha_n}[Z_{n,j}^2]}{\eta^2n^*}\\
&\le
\frac{(2\delta n^*+2)C}{\eta^2n^*}.
\end{aligned}
\]
Consequently,
\[
\limsup_{n\to\infty}
P\left(
\frac{Z_{n,\ell_n}}{\sqrt{n^*}}>\eta
\right)
\le
\frac{2C\delta}{\eta^2}.
\]
Since $\delta>0$ is arbitrary, letting $\delta$ arbitrarily small
\[
\frac{Z_{n,\ell_n}}{\sqrt{n^*}}\xrightarrow{p}0.
\]
Therefore,
\[
\frac{s_{\ell_n}^{(n)}(|\bar{g}_n|)}
{\sqrt{n^*}v_n}
\xrightarrow{p}0.
\]
By similar logic, $$
\dfrac{1}{\sqrt{n^*}v_n}\sum_{t=0}^{\sigma_{\alpha_n}(0)}\Big\lvert\bar{g}_n(\boldsymbol{Y_t}^{(n)})\Big\rvert \xrightarrow[]{p}0.
$$ Combining these results, we obtain
\begin{equation}\label{step1}
    \Big\lvert \dfrac{1}{\sqrt{n^*}v_n}\sum_{j=0}^{n-m_n+1}\bar{g}_n(\boldsymbol{Y_j}^{(n)})-\dfrac{1}{\sqrt{n^*}} \sum_{j=0}^{\ell_n-1}s_j'^{(n)}(\bar{g}_n) \Big\rvert \xrightarrow{p}0.
\end{equation}

Since $\frac{\ell_n}{n\pi_n(\alpha_n)}\xrightarrow[]{p}1$ as $n\to \infty$, $\exists\quad n_0\in \mathbb{N}$ s.t. for $n \ge n_0$
$$
P(\underline{n}\le \ell_n-1 \le \bar{n})\ge 1-\epsilon.
$$
Hence, by Kolmogorov's inequality, for any arbitrary $\beta>0$,
\begin{equation}\label{kolmogorov}
\begin{aligned}[b]
    P \Big(\big\lvert\dfrac{1}{\sqrt{n^*}}\sum_{j=0}^{\ell_n-1}s_j'(\bar{g}_n)-\dfrac{1}{\sqrt{n^*}}\sum_{j=0}^{n^*}s_j'(\bar{g}_n)  \big\rvert > \beta \Big) & \le \epsilon  && + P \Big(\big\lvert \max_{\underline{n}\le l \le n^*}\sum_{j=l}^{n^*}s_j'(\bar{g}_n) \big\rvert > \beta\sqrt{n^*} \Big) \\
    &  && + P \Big(\big\lvert \max_{n^*\le l \le \bar{n}}\sum_{j=n*}^{l}s_j'(\bar{g}_n) \big\rvert > \beta\sqrt{n^*} \Big)\\
    & \le \epsilon &&+\dfrac{2\epsilon\pi_n(\alpha_n)(n-m_n+2)E_{\alpha_n}\Big[s_0^{(n)}(\bar{g}_n)\Big]^2}{\beta^2 n^*} \\
    &\le \epsilon &&+ \dfrac{4\epsilon}{\beta^2} .
\end{aligned}
\end{equation}
From equation (\ref{kolmogorov}), we can conclude that
\begin{equation}
    \Big\lvert\dfrac{1}{\sqrt{n^*}}\sum_{j=0}^{\ell_n-1}s_j'(\bar{g}_n)-\dfrac{1}{\sqrt{n^*}}\sum_{j=0}^{n^*}s_j'(\bar{g}_n) \Big\rvert \xrightarrow{p} 0.
\end{equation}
Combining this with equation (\ref{step1}), we have
\begin{equation}\label{step2}
    \dfrac{1}{\sqrt{n^*}v_n}\sum_{j=0}^{n-m_n+1}\bar{g}_n(\boldsymbol{Y_j}^{(n)})- \dfrac{1}{\sqrt{n^*}}\sum_{j=0}^{n^*}s_j'(\bar{g}_n) \xrightarrow{p} 0.
\end{equation}
Using the CLT for row-wise i.i.d random variables with mean $0$ and variance $1$,
we obtain 
\begin{equation}\label{step3}
\dfrac{1}{\sqrt{n^*}}\sum_{j=0}^{n^*}s_j'(\bar{g}_n) \xrightarrow{d} \mathcal{N}(0,1).
\end{equation}
Hence, from equation (\ref{step2}), we conclude
$$
\begin{aligned}
& \dfrac{1}{\sqrt{n^*}v_n}\sum_{j=0}^{n-m_n+1}\bar{g}_n(\boldsymbol{Y_j}^{(n)})\xrightarrow{d} \mathcal{N}(0,1)\\
\implies & \dfrac{1}{\sqrt{n\pi_n(\alpha_n)}v_n}\sum_{j=0}^{n-m_n+1}\bar{g}_n(\boldsymbol{Y_j}^{(n)})\xrightarrow{d} \mathcal{N}(0,1)\\
\implies & \dfrac{1}{\sqrt{n}}\sum_{j=0}^{n-m_n+1}\dfrac{\bar{g}_n(\boldsymbol{Y_j}^{(n)})}{\sqrt{\dfrac{E_{\alpha_n}(\tau_{\alpha_n})}{E_{\alpha_n}\Big[s_0^{(n)}(\bar{g}_n)\Big]^2}}}\xrightarrow{d} \mathcal{N}(0,1).
\end{aligned}
$$

\section{Example} \label{chap3:sec_example}
In the previous section, we discussed requirements for a large sample CLT. One of the conditions to satisfy the CLT is $n\pi_n(\alpha_n)\to\infty$ as $n\to \infty$. It is easy to see that there is always a state $\alpha_n\in\Sigma^{m_n}$ with $\pi_n(\alpha_n)\ge 1/d^{m_n}$, hence assuming $n/d^{m_n}\to \infty$ will suffice. However, this bound is the most obvious one. If we can make the bound tighter, we can hope for $m_n\to\infty$ at a faster rate. One such example is given below. 

\begin{example}
    Suppose $\Sigma=\{0,1\}$. Consider a VLMC of length $n$ and order $m$ with the contexts $\{0,10,110,\ldots,1^{m-1}0,1^{m}\}$. Hence there are $m+1$ leaves in the context tree. Suppose that for a fixed $m$, $P(0|1^{j}0)=1-p_j$ and $P(1|1^{j}0)=p_j$, $j=0,1,\ldots,m-1$; and $P(0|1^{m})=1-p_m$. In this set-up, we find that
$$
\dfrac{1}{\pi_n(1^{m-1}0)}=\prod_{j=0}^{m-2}\frac{1}{p_j}+\frac{p_{m-1}+1-p_m}{1-p_m}+\sum_{j=1}^{m-2}\prod_{k=j}^{m-2}\frac{1}{p_j}.
$$
\end{example}
\textbf{Proof:} We use recursive relations for stationary probabilities to determine $\pi_n(1^{m-1}0)$. For any history $(i_1,\ldots,i_m)\in\{0,1\}^m$, denote the index by $w(i_1,\ldots,i_m)=1+\sum_{j=0}^{m-1}2^ji_{m-j}$. Then the function $w$ will provide a unique index, ranging from $1$ to $2^m$. For simplicity, we write $\pi_n(j)$ as the stationary probability of the $m$-tuple $(i_1,\ldots,i_m)\in\{0,1\}^m$ having $w(i_1,\ldots,i_m)=j.$ Using the definition of stationary probability, we find that
\begin{equation}\label{eq_relation_1}
    \sum_{j=0}^{2^{m-k}-1}\pi_n(2^kj+2^{k-1})=\sum_{j=0}^{2^{m-k}-1}\pi_n(2^kj+2^{k}-1);\quad k=2,3,\ldots,m.
\end{equation}
Also define the following quantities and obtain the relations
\begin{equation}
\begin{aligned}
        A_{k-1,1}:=\sum_{j=0}^{2^{m-k}-1}\pi_n(2^kj+2^{k-1}-1)=c_{k-1}\sum_{j=0}^{2^{m-k}-1}\pi_n(2^kj+2^{k-1})=:c_{k-1}A_{k-1,2}
\end{aligned}
\end{equation}
and $c_{k-1}=(1-p_{k-2})/p_{k-2}$ for $k=2,3,\ldots,m$ and $c_m=(1-p_{m-1})/p_m$. Define $A_{m,1}=\pi_n(2^m-1)$, $A_{m,2}=\pi_n(2^m)$. Now, observe that
$$
\begin{aligned}
A_{m-2,1}&=c_{m-2}A_{m-2,2}=c_{m-2}(A_{m-1,1}+A_{m,1})\\
&=c_{m-2}(1+c_{m-1})A_{m,1};\\
A_{m-3,1}&=c_{m-3}(1+c_{m-2})(1+c_{m-1})A_{m,1};\\
\ldots & \\
A_{1,1}&=c_1(1+c_2)\ldots(1+c_{m-1})A_{m,1}
\end{aligned}
$$
and
$$
\begin{aligned}
A_{1,2}&=(1+c_2)\ldots(1+c_{m-1})A_{m,1}\\
&=c_{m-2}(1+c_{m-1})A_{m,1};\\
A_{2,2}&=(1+c_3)\ldots(1+c_{m-1})A_{m,1};\\
\ldots & \\
A_{m-2,2}&=(1+c_{m-1})A_{m,1};\\
A_{m-1,2} &= A_{m,1}.
\end{aligned}
$$
Combining these relations, we have
$$
A_{m,1}=\pi_n(2^m-1)=\dfrac{1}{\prod_{i=1}^{m-1}a_i+a_m+\sum_{i=2}^{m-1}a_ia_{i+1}\ldots a_{m-1}},
$$
where $a_i=1+c_i$. Replacing the values of $c_i$, we find that
$$
\dfrac{1}{\pi_n(1^{m-1}0)}=\prod_{j=0}^{m-2}\frac{1}{p_j}+\frac{p_{m-1}+1-p_m}{1-p_m}+\sum_{j=1}^{m-2}\prod_{k=j}^{m-2}\frac{1}{p_j}:=q(\boldsymbol{p},m).
$$
If the transition probabilities satisfy $q(\boldsymbol{p},m_n)=o(n)$, we can claim $n\pi_n(1^{m-1}0)\to\infty$..
\begin{corollary}
Suppose $p_j=(j+1)/(j+2)$. Then $n\pi_n(1^{m_n-1}0)\to\infty$ if $m_n\log(m_n)/n \to 0$. 
\end{corollary}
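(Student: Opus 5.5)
The plan is to evaluate the closed form $q(\boldsymbol{p},m)$ from the Example explicitly for $p_j=(j+1)/(j+2)$, show that $q(\boldsymbol{p},m)=m\log m+O(m)$, and then apply the observation already made after the Example: if $q(\boldsymbol{p},m_n)=o(n)$, then $n\pi_n(1^{m_n-1}0)=n/q(\boldsymbol{p},m_n)\to\infty$. I read the inner product in the last sum as $\prod_{k=j}^{m-2}1/p_k$; the index $p_j$ in the displayed formula appears to be a typo.

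\emph{The three terms.} With $1/p_k=(k+2)/(k+1)$ every product telescopes.
First, $\prod_{j=0}^{m-2}\frac{j+2}{j+1}=m$.
Second, using $p_{m-1}=m/(m+1)$ and $1-p_m=1/(m+2)$,
\[
\frac{p_{m-1}+1-p_m}{1-p_m}=(m+2)\Bigl(\frac{m}{m+1}+\frac{1}{m+2}\Bigr)=\frac{m(m+2)}{m+1}+1,
\]
which lies between $m$ and $m+2$.
Third, for each $j$, $\prod_{k=j}^{m-2}\frac{k+2}{k+1}=\frac{m}{j+1}$, so
\[
\sum_{j=1}^{m-2}\frac{m}{j+1}=m\,(H_{m-1}-1),
\]
where $H_r$ is the $r$-th harmonic number.

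\emph{Asymptotics and conclusion.} Using $H_{m-1}=\log m+O(1)$, the three terms give
\[
q(\boldsymbol{p},m)=m\log m+O(m), \qquad\text{so}\qquad q(\boldsymbol{p},m)\le 2m\log m \text{ for large } m.
\]
If $m_n\log m_n/n\to0$, then $q(\boldsymbol{p},m_n)=o(n)$ and hence $n\pi_n(1^{m_n-1}0)\to\infty$. Conversely, the expansion shows $q(\boldsymbol{p},m)\asymp m\log m$, so this growth condition is also essentially necessary for this choice of $\alpha_n$.

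\emph{Main obstacle.} The computation itself is routine once the typo is resolved. The real care is in confirming that the formula $1/\pi_n(1^{m-1}0)=q(\boldsymbol{p},m)$ from the Example holds with the correct indexing, in particular the boundary term involving $p_{m-1}$ and $p_m$. I would check this first on $m=2$ or $m=3$ by solving the stationary equations directly. Any discrepancy there should only shift $q$ by $O(m)$, which does not affect the conclusion.
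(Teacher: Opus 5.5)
Your proposal is correct and is essentially the paper's argument. The paper also telescopes the products under $p_j=(j+1)/(j+2)$, implicitly reading the inner factor as $1/p_k$ just as you do, and bounds $q(\boldsymbol{p},m_n)$ by $2m_n+m_n\sum_{j=1}^{m_n-2}\frac{1}{j+1}=\mathcal{O}(m_n\log m_n)$. Your exact evaluation is slightly sharper: the non-harmonic terms actually sum to $2m+2-\frac{1}{m+1}$, so the paper's bound is off by a harmless additive constant, and you also get the matching lower bound $q(\boldsymbol{p},m)\ge m\log m-\mathcal{O}(m)$. The check you planned will go through, since the Example's formula (with your reading) is exactly the reciprocal of the stationary probability that the trailing run of ones has length exactly $m-1$.
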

\textbf{Proof:} $q(p,m_n)\le 2m_n+m_n\sum_{j=1}^{m_n-2}\frac{1}{j+1}=\mathcal{O}(m_n\log(m_n))$. Hence, the required condition simplifies to $m_n\log(m_n)/n \to 0$ as $n\to \infty$.
\section{Conclusion}\label{chap3:sec_conclusion}
The theoretical results given guarantee central limit theorems for certain functionals of a higher order Markov chain, whose order changes with the length of the chain.  We also simplify the conditions of the theorem (\ref{chap3:thm_clt1}) for a particular type of binary variable length Markov chain and determined how the order $m_n$ varies with $n$. Similar reductions may be possible for Sparse Markov Models (SMMs) by exploiting their underlying partition structure. Investigating such conditions for SMMs and other sparse higher-order Markov models is an interesting direction for future work.

\bibliographystyle{plainnat}
\bibliography{reference}

@STRING{JES       = "J.\ Earth Syst.\ Sci."}

@article{raftery1985model,
  title={A model for high-order \uppercase{M}arkov chains},
  author={Raftery, Adrian E},
  journal={Journal of the Royal Statistical Society: Series B (Methodological)},
  volume={47},
  number={3},
  pages={528--539},
  year={1985},
  publisher={Wiley Online Library}
}

@article{jaaskinen2014sparse,
  title={Sparse \uppercase{M}arkov chains for sequence data},
  author={J{\"a}{\"a}skinen, V{\"a}in{\"o} and Xiong, Jie and Corander, Jukka and Koski, Timo},
  journal={Scandinavian Journal of Statistics},
  volume={41},
  number={3},
  pages={639--655},
  year={2014},
  publisher={Wiley Online Library}
}

@article{xiong2016recursive,
  title={Recursive learning for sparse \uppercase{M}arkov models},
  author={Xiong, Jie and J{\"a}{\"a}skinen, V{\"a}in{\"o} and Corander, Jukka and others},
  journal={Bayesian analysis},
  volume={11},
  number={1},
  pages={247--263},
  year={2016},
  publisher={International Society for Bayesian Analysis}
}

@article{buhlmann1999variable,
  title={Variable length \uppercase{M}arkov chains},
  author={B{\"u}hlmann, Peter and Wyner, Abraham J},
  journal={The Annals of Statistics},
  volume={27},
  number={2},
  pages={480--513},
  year={1999},
  publisher={Institute of Mathematical Statistics}
}

@article{buhlmann2000model,
  title={Model selection for variable length \uppercase{M}arkov chains and tuning the context algorithm},
  author={B{\"u}hlmann, Peter},
  journal={Annals of the Institute of Statistical Mathematics},
  volume={52},
  number={2},
  pages={287--315},
  year={2000},
  publisher={Springer}
}

@article{rissanen1983universal,
  title={A universal prior for integers and estimation by minimum description length},
  author={Rissanen, Jorma},
  journal={The Annals of statistics},
  volume={11},
  number={2},
  pages={416--431},
  year={1983},
  publisher={Institute of Mathematical Statistics}
}

@inproceedings{garcia2011minimal,
  title={Minimal \uppercase{M}arkov models},
  author={Garc{\i}a, Jes{\'u}s E and Gonz{\'a}lez-L{\'o}pez, Ver{\'o}nica A and de Holanda, Rua Sergio Buarque and Geraldo, Cidade Universit{\'a}ria-Barao},
  booktitle={Fourth Workshop on Information Theoretic Methods in Science and Engineering},
  pages={25},
  year={2011}
}

@article{papageorgiou2022posterior,
  title={Posterior representations for Bayesian context trees: Sampling, estimation and convergence},
  author={Papageorgiou, Ioannis and Kontoyiannis, Ioannis},
  journal={arXiv preprint arXiv:2202.02239},
  year={2022}
}

@article{kontoyiannis2020bayesian,
  title={Bayesian context trees: Modelling and exact inference for discrete time series},
  author={Kontoyiannis, Ioannis and Mertzanis, Lambros and Panotopoulou, Athina and Papageorgiou, Ioannis and Skoularidou, Maria},
  journal={arXiv preprint arXiv:2007.14900},
  year={2020}
}

@article{bennett2023fitting,
  title={Fitting sparse {Markov} models through a collapsed Gibbs sampler},
  author={Bennett, Iris and Martin, Donald EK and Lahiri, Soumendra Nath},
  journal={Computational Statistics},
  volume={38},
  number={4},
  pages={1977--1994},
  year={2023},
  publisher={Springer}
}

@article{majumder2026fitting,
  title={Fitting sparse {Markov} models to categorical time series using convex clustering},
  author={Majumder, Tuhin and Lahiri, Soumendra N and Martin, Donald EK},
  journal={Journal of Computational and Graphical Statistics},
  pages={1--13},
  year={2026},
  publisher={Taylor \& Francis}
}
\end{document}